\documentclass[a4paper,oneside,reqno,12pt]{amsart}

\usepackage{cmap}
\usepackage[T1]{fontenc}
\usepackage[utf8]{inputenc}
\usepackage[english]{babel}
\usepackage{csquotes}
\usepackage{lmodern}

\usepackage{amsmath,amssymb,amsfonts,mathtools}

\usepackage[
    a4paper,
    hmargin=2.2cm,
    vmargin=3cm
]{geometry}

\usepackage{xcolor}
\usepackage{graphicx}
\usepackage{subcaption}
\usepackage{tikz}
\usetikzlibrary{calc}

\usepackage{hyperref}
\hypersetup{
    colorlinks = true,
    urlcolor   = blue,
    linkcolor  = red,
    citecolor  = green
}

\newtheorem{theorem}{Theorem}
\newtheorem{lemma}[theorem]{Lemma}
\newtheorem{corollary}[theorem]{Corollary}
\newtheorem{conjecture}{Conjecture}

\DeclareMathOperator{\vol}{vol}

\newcommand{\E}{\mathbb{E}}

\title{Optimal Partial Plank Coverings}

\author[E. Bakaev]{Egor Bakaev}
\address{Egor Bakaev, 
Department of Computer Science, University of Copenhagen}

\author[A. Polyanskii]{Alexander~Polyanskii}

\address{Alexander Polyanskii,
Department of Mathematics, Emory University,
400 Dowman Dr, Atlanta, GA 30322, USA}
\email{\href{mailto:apolian@emory.edu}{apolian@emory.edu}}
\urladdr{\url{https://polyanskii.com}}

\subjclass[2020]{52A40, 52C17}
\keywords{Tarski's plank problem}

\begin{document}

\begin{abstract}
A plank of width $w$ in a Euclidean space is the set of points lying between two parallel hyperplanes at distance $w$ from each other. Bang's theorem says that if a family of planks covers a convex body $K$, then their total width is at least the width of $K$, that is, the width of the thinnest plank containing $K$.

We study a quantitative variant of this problem in the case where the total width of the planks is fixed. How should the planks be placed so as to cover as much of the volume of the body as possible?

For the central case where $K$ is a Euclidean ball, K\'aroly Bezdek asked whether the optimal arrangement consists of a single plank centered at the origin. We give an affirmative answer to this question. We also show that for every planar convex body an optimal partial covering is attained by a single plank.
\end{abstract}

\maketitle

\section{Introduction}

Let $\E^d$ denote the $d$-dimensional Euclidean space, with Euclidean norm
$|\cdot|$. A \emph{convex body} in $\E^d$ is a compact convex set with
nonempty interior. A \emph{plank} of width $w$ is the closed (or open) set of
points lying between two parallel hyperplanes at distance $w$ from each
other. For a convex body $K$, its \emph{width}, denoted by $w(K)$, is
the width of a thinnest plank containing~$K$.

The classical plank problem, attributed to Tarski~\cite{Tarski1932}, asks
whether a convex body $K$ can be covered by closed planks whose total width is
smaller than $w(K)$. Bang proved that this is impossible
\cite{Bang1950,Bang1951}. This result, now known as Bang's plank theorem,
initiated a broad line of research in discrete and convex geometry, as well as in
functional analysis.

Revisiting these questions, K\'aroly Bezdek~\cite[Section~5]{Bezdek2013} formulated a quantitative partial-covering
problem for the Euclidean ball in which the widths of the planks are
prescribed. Let
\(B^d\)
be the unit ball centered at the origin, and let $w_1,\ldots,w_n>0$ satisfy
\[
    W:=\sum_{i=1}^n w_i<2=w(B^d).
\]
Bezdek~\cite[Problem~5.1]{Bezdek2013} asked whether, among all planks
$P_1,\ldots,P_n$ with $w(P_i)=w_i$, the covered volume
\[
    \vol_d\!\left(B^d\cap\bigcup_{i=1}^n P_i\right)
\]
is maximized if and only if the planks are parallel, in an arbitrary common direction, and their union, up to boundaries, is a single plank of width $W$ centered at the origin.
He proved the assertion in two cases. For two planks in every dimension, he first established the planar case and then integrated over two-dimensional sections. 
For an arbitrary finite family in dimension~$3$, he used the
spherical-surface argument originating in Moese's solution to a different
problem posed by Tarski~\cite{Moese1932}. Tarski subsequently adapted this
argument to the disk case of the plank problem~\cite{Tarski1932}. Bezdek
applied it to concentric spheres and then
integrated over the radius.

We pose the following conjecture.

\begin{conjecture}\label{conj:fixed-width}
Let $K\subset\E^d$ be a convex body and let $W>0$. There exists a plank $P$ of
width~$W$ such that, for every finite family of planks $P_1,\ldots,P_n$
satisfying
\[
    \sum_{i=1}^n w(P_i)=W,
\]
one has
\[
    \vol_d\!\left(K\cap\bigcup_{i=1}^n P_i\right)
    \leq \vol_d(K\cap P).
\]
\end{conjecture}

In words, among all families of planks with prescribed total width, an
optimal family can be chosen to consist of a single plank.
For $K=B^d$ and $0<W<2$, a plank of width $W$ has largest intersection with
the ball when it is centered at the origin. 

We prove the ball case of Conjecture~\ref{conj:fixed-width} in Theorem \ref{thm:main}. The theorem, together with the subsequent uniqueness discussion, solves Bezdek's problem.

He also proved Conjecture~\ref{conj:fixed-width} for triangles \cite[Corollary~5.8]{Bezdek2013}. We prove it for every planar convex body; see Theorem~\ref{thm:planar-bodies}.

\section{A reduction for plank families}

For sets $A,B\subset\E^d$, their \emph{Minkowski sum} is defined by
\[
    A+B:=\{a+b:a\in A,\ b\in B\}.
\]
We use the convention that $A+\varnothing=\varnothing$.
For sets
$A,B\subset\E^d$ we call
\[
    A\div B
    :=
    \bigcap_{b\in B}(A-b)
    =
    \{x\in\mathbb{R}^d:B+x\subseteq A\}
\]
the \emph{Minkowski difference} of $A$ and $B$; the corresponding
operation is called \emph{Minkowski subtraction}
\cite[Section~3.1, pp.~146--147]{schneider2013convex}.
In particular, $A\div\varnothing=\E^d$. 
This should not be confused with the set
$A-B=A+(-B).$

The following inclusion is the form of concavity under
Minkowski subtraction. (The two-set case of the lemma follows from
\cite[Lemma~3.1.13]{schneider2013convex}; the finite-set case follows by
induction.)

\begin{lemma}
\label{lem:minkowski-subtraction-concavity}
Let $K\subset\E^d$ be convex, let
$A_1,\ldots,A_n\subset\E^d$ be arbitrary sets, and let
$\lambda_1,\ldots,\lambda_n \geq 0$ satisfy $\sum_{i=1}^n\lambda_i=1$. Then
\[
    \lambda_1(K\div A_1)+\cdots+\lambda_n(K\div A_n)
    \subseteq
    K\div\bigl(\lambda_1A_1+\cdots+\lambda_nA_n\bigr).
\]
\end{lemma}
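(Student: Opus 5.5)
Fix a point $x$ in the left-hand side. By the definition of Minkowski sum, $x=\sum_{i=1}^n\lambda_i x_i$ with $x_i\in K\div A_i$ for each $i$. If some $A_j=\varnothing$ with $\lambda_j>0$, then $\lambda_1A_1+\cdots+\lambda_nA_n=\varnothing$ by our convention, so the right-hand side is $K\div\varnothing=\E^d$ and there is nothing to prove. We must also settle how the convention treats $0\cdot\varnothing$. Interpreting $\lambda_jA_j$ with $\lambda_j=0$ literally as $\{0\cdot a:a\in A_j\}$, which is empty when $A_j$ is empty, makes the right side $\E^d$ again. So we may assume every $A_i$ is nonempty. (The paper could equally drop indices with $\lambda_i=0$; I will check that this edge case is consistent with whichever reading is intended.)

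Assuming all $A_i$ are nonempty, we show directly that $x\in K\div\bigl(\sum_i\lambda_iA_i\bigr)$. This means showing $x+\sum_i\lambda_i a_i\in K$ for every choice of $a_i\in A_i$. Write
\[
    x+\sum_{i=1}^n\lambda_i a_i=\sum_{i=1}^n\lambda_i\,(x_i+a_i),
\]
which uses $\sum_i\lambda_i=1$. Since $x_i\in K\div A_i$, we have $A_i+x_i\subseteq K$, so $x_i+a_i\in K$ for each $i$. The right-hand side is then a convex combination of points of $K$, and it lies in $K$ because $K$ is convex. This is the whole argument, and it handles all $n$ at once without the induction suggested by the reference to \cite[Lemma~3.1.13]{schneider2013convex}.

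The only real subtlety is bookkeeping with empty sets and zero coefficients. Specifically, we must ensure that the decomposition $x=\sum\lambda_ix_i$ exists: the left side is empty if some $K\div A_i$ is empty with $\lambda_i$ arbitrary, under the literal convention. We must also ensure the identity above is applied only when every $A_i$ is nonempty. I expect no other obstacle. Alternatively, one can present the proof as induction on $n$ from the two-set case, grouping $\lambda_1A_1+\cdots+\lambda_{n-1}A_{n-1}$ after rescaling by $1-\lambda_n$. However, the direct convex-combination argument above is shorter and avoids the rescaling when $\lambda_n=1$.
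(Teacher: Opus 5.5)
Your proof is correct and is essentially the paper's own argument. After disposing of the cases where some $A_i$ is empty (so the right-hand side is $\E^d$) or the left-hand side is empty, you write a point as $\sum_i\lambda_i x_i$ with $x_i\in K\div A_i$ and use convexity to get $\sum_i\lambda_i x_i+\sum_i\lambda_i a_i=\sum_i\lambda_i(x_i+a_i)\in K$. The paper's proof is also direct for all $n$; its reference to induction from Schneider's two-set lemma is only a pointer to the literature.
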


\begin{proof}
If $A_i=\varnothing$ for some $i$, then the right-hand side is
$K\div\varnothing=\E^d$.
If $K \div A_i=\varnothing$ for some $i$, then the left-hand side is
$\varnothing$. In both these cases the inclusion is immediate. We may thus
assume that all the sets $A_i$ and $K \div A_i$ are nonempty.
Take $x_i\in K\div A_i$. For arbitrary $a_i\in A_i$, one has
$x_i+a_i\in K$. Hence convexity of $K$ gives
\[
    \sum_{i=1}^n\lambda_i x_i+\sum_{i=1}^n\lambda_i a_i
    =
    \sum_{i=1}^n\lambda_i(x_i+a_i)
    \in K.
\]
Since the $a_i$ were arbitrary, the asserted inclusion follows.
\end{proof}

Given open planks $P_1,\ldots,P_n$, write each of them as
\[
    P_i=
    \left\{
        x\in\E^d:
        |\langle x,a_i\rangle-\alpha_i|<|a_i|^2
    \right\},
\]
where $a_i\neq0$. Thus the width of $P_i$ is $2|a_i|$. We associate with this family the \emph{Bang set}, which may be viewed as a
``discrete zonotope'':
\[
    Z_{\mathcal P}:=\sum_{i=1}^n\{-a_i,a_i\}.
\]

\subsection{The Bang map}

The construction in this subsection is essentially a reformulation of
Bang's classical argument in the proof of the plank theorem
\cite{Bang1950,Bang1951}. We present it as a piecewise-translation map,
since the volume-preserving property of this map will be important below.

For $\varepsilon=(\varepsilon_1,\ldots,\varepsilon_n)\in\{\pm1\}^n$, put
\[
    v(\varepsilon):=\sum_{i=1}^n\varepsilon_i a_i,
    \qquad
    \alpha(\varepsilon):=\sum_{i=1}^n\varepsilon_i\alpha_i.
\]
For $t\in\E^d$, define the function
$
\Phi_t\colon\{\pm1\}^n\to\mathbb R
$
by
\[
\Phi_t(\varepsilon)
:=|t+v(\varepsilon)|^2-2\alpha(\varepsilon).
\]

Fix an ordering of all the sign vectors in $\{\pm1\}^n$. For each
$t\in\E^d$, let $\varepsilon(t)$ be the first sign vector in this
ordering at which $\Phi_t$ attains its maximum. 
Define the {\em Bang map}
$
    F\colon\E^d\to\E^d
$
by
\[
    F(t):=t+v(\varepsilon(t)).
\]

For centered planks, the construction below is the farthest-point decomposition of the Bang set used in \cite[Section~4]{balitskiy2021multi} and the maximal-norm construction in \cite[Lemma~6]{polyanskii2021cap}.

For
\(\varepsilon\in\{\pm 1\}^n\), define
\[
    C_\varepsilon
    :=
    \left\{
        t\in\mathbb{E}^d :
        \varepsilon(t)=\varepsilon
    \right\}.
\]
The sets $C_\varepsilon$  form a partition of $\mathbb{E}^d$.
Also for
\(\varepsilon\in\{\pm 1\}^n\), define
\[
    Q_\varepsilon
    :=
    \left\{
        x\in\mathbb{E}^d :
        \varepsilon_i
        \bigl(
            \langle x,a_i\rangle-\alpha_i
        \bigr)
        \geq
        |a_i|^2
        \text{ for all }i
    \right\}.
\]

\begin{lemma}\label{lem:Q-disjoint}
The sets \(Q_\varepsilon\) are pairwise disjoint, and each
\(Q_\varepsilon\) is disjoint from all the open planks
\(P_1,\ldots,P_n\).
\end{lemma}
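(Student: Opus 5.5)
Both assertions follow directly from the definitions, with the sign vector used as a separator.

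\emph{Disjointness from the planks.} Take $x\in Q_\varepsilon$ and fix an index $i$. The defining inequality gives
\[
    \varepsilon_i\bigl(\langle x,a_i\rangle-\alpha_i\bigr)\geq |a_i|^2 .
\]
Since $|\varepsilon_i|=1$, this yields
\[
    \bigl|\langle x,a_i\rangle-\alpha_i\bigr|
    \geq \varepsilon_i\bigl(\langle x,a_i\rangle-\alpha_i\bigr)
    \geq |a_i|^2 .
\]
This is exactly the negation of the strict inequality defining the open plank $P_i$, so $x\notin P_i$. The index $i$ was arbitrary, so $Q_\varepsilon\cap P_i=\varnothing$ for every $i$.

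\emph{Pairwise disjointness.} Let $\varepsilon\neq\varepsilon'$ and choose an index $i$ with $\varepsilon_i\neq\varepsilon'_i$, so that $\varepsilon'_i=-\varepsilon_i$. Suppose some $x$ lies in $Q_\varepsilon\cap Q_{\varepsilon'}$, and set $s=\langle x,a_i\rangle-\alpha_i$. Then $\varepsilon_i s\geq|a_i|^2$ and $-\varepsilon_i s\geq|a_i|^2$. Adding these two inequalities gives $0\geq 2|a_i|^2$. This contradicts $a_i\neq0$, so the intersection is empty.

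There is no real obstacle here. The only point needing care is that pairwise disjointness relies on $a_i\neq 0$, which guarantees that the two closed half-spaces $\{s\geq|a_i|^2\}$ and $\{s\leq-|a_i|^2\}$ are separated by a gap of positive width.
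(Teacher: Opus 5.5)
Your proof is correct and follows the paper's argument: you pick a coordinate where the two sign vectors differ to get two incompatible inequalities, and you use $\bigl|\langle x,a_i\rangle-\alpha_i\bigr|\geq|a_i|^2$ to exclude $x$ from each open plank. Your one addition is to spell out the incompatibility by adding the inequalities and invoking $a_i\neq 0$, which the paper leaves implicit with ``which is impossible.''
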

\begin{proof}
Let \(\varepsilon, \delta \in\{\pm 1\}^n\) and 
\(\varepsilon\neq\delta\). Choose \(j\) such that
\(\delta_j=-\varepsilon_j\). If
\(x\in Q_\varepsilon\cap Q_\delta\), then
\[
    \varepsilon_j
    \bigl(
        \langle x,a_j\rangle-\alpha_j
    \bigr)
    \geq
    |a_j|^2
\quad 
\text{and}
\quad
    -\varepsilon_j
    \bigl(
        \langle x,a_j\rangle-\alpha_j
    \bigr)
    \geq
    |a_j|^2,
\]
which is impossible. Thus, the sets \(Q_\varepsilon\) are pairwise
disjoint.

Now, if \(x\in Q_\varepsilon\), then for every \(i\),
\[
    \left|
        \langle x,a_i\rangle-\alpha_i
    \right|
    \geq
    |a_i|^2.
\]
Hence \(x\notin P_i\) for every \(i\), so \(Q_\varepsilon\) is
disjoint from all the open planks.
\end{proof}

The following lemma shows, in particular, that the map
$F\colon\mathbb{E}^d\to\mathbb{E}^d$ is injective.

\begin{lemma}
\label{lem:bang-cell-inclusion}
 For every \(\varepsilon\in\{\pm 1\}^n\), the set
\(C_\varepsilon\) is measurable and
\[
    F(C_\varepsilon)
    =
    C_\varepsilon+v(\varepsilon)
    \subseteq
    Q_\varepsilon.
\]
\end{lemma}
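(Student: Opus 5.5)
We prove the three assertions in turn: measurability, the identity $F(C_\varepsilon)=C_\varepsilon+v(\varepsilon)$, and the inclusion in $Q_\varepsilon$.

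\emph{Measurability.} For each pair $\varepsilon\neq\delta$, the difference $\Phi_t(\varepsilon)-\Phi_t(\delta)$ is an affine function of $t$, because the quadratic terms $|t|^2$ cancel:
\[
    \Phi_t(\varepsilon)-\Phi_t(\delta)
    =2\langle t,v(\varepsilon)-v(\delta)\rangle
    +|v(\varepsilon)|^2-|v(\delta)|^2
    -2\alpha(\varepsilon)+2\alpha(\delta).
\]
The condition $\varepsilon(t)=\varepsilon$ says two things. For every $\delta$ preceding $\varepsilon$ in the fixed ordering, $\Phi_t(\varepsilon)>\Phi_t(\delta)$. For every $\delta$ following $\varepsilon$, $\Phi_t(\varepsilon)\geq\Phi_t(\delta)$. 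Hence $C_\varepsilon$ is a finite intersection of open and closed half-spaces (some possibly degenerate, i.e.\ empty or all of $\E^d$). It is therefore a convex Borel set, in particular measurable.

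\emph{The identity.} The equality $F(C_\varepsilon)=C_\varepsilon+v(\varepsilon)$ is immediate from the definition, since $F(t)=t+v(\varepsilon)$ for every $t\in C_\varepsilon$.

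\emph{The inclusion.} Take $t\in C_\varepsilon$, put $x=t+v(\varepsilon)$, and fix an index $i$. Let $\delta$ be $\varepsilon$ with its $i$-th coordinate flipped. Then $v(\delta)=v(\varepsilon)-2\varepsilon_i a_i$ and $\alpha(\delta)=\alpha(\varepsilon)-2\varepsilon_i\alpha_i$. Maximality of $\Phi_t$ at $\varepsilon$ gives $\Phi_t(\varepsilon)\geq\Phi_t(\delta)$, that is,
\[
    |x|^2-2\alpha(\varepsilon)\geq |x-2\varepsilon_i a_i|^2-2\alpha(\varepsilon)+4\varepsilon_i\alpha_i .
\]
Expanding $|x-2\varepsilon_i a_i|^2=|x|^2-4\varepsilon_i\langle x,a_i\rangle+4|a_i|^2$ and cancelling yields
\[
    0\geq -4\varepsilon_i\langle x,a_i\rangle+4|a_i|^2+4\varepsilon_i\alpha_i ,
\]
that is, $\varepsilon_i(\langle x,a_i\rangle-\alpha_i)\geq|a_i|^2$. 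Since $i$ was arbitrary, $x\in Q_\varepsilon$.

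\emph{Injectivity.} Combining the inclusion with Lemma~\ref{lem:Q-disjoint}, the images $F(C_\varepsilon)$ lie in pairwise disjoint sets $Q_\varepsilon$. On each $C_\varepsilon$, $F$ is a translation and hence injective. Therefore $F$ is injective on all of $\E^d$.

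The only step needing care is the sign bookkeeping in the single-flip comparison. Note that only the weak inequality is needed, so the tie-breaking by the fixed ordering plays no role in the inclusion; it matters only for the description of $C_\varepsilon$ in the measurability argument.
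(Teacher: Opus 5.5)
Your proof is correct and follows essentially the same route as the paper. You get measurability from the affine differences $\Phi_t(\varepsilon)-\Phi_t(\delta)$, which make $C_\varepsilon$ a finite intersection of open and closed half-spaces, and the inclusion from comparing $\varepsilon$ with its single-coordinate flip, which yields exactly $\varepsilon_i(\langle x,a_i\rangle-\alpha_i)\geq|a_i|^2$; your note on degenerate half-spaces and your injectivity remark are harmless additions.
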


\begin{proof}
Recall that $\varepsilon(t)$ is chosen as the first maximizer of
$\Phi_t$ with respect to the fixed ordering of $\{\pm 1\}^n$. Thus,
$t\in C_\varepsilon$ precisely when
$\Phi_t(\varepsilon)>\Phi_t(\delta)$
for every $\delta$ preceding $\varepsilon$, and
$\Phi_t(\varepsilon)\geq\Phi_t(\delta)$
for every $\delta$ following $\varepsilon$. Since each difference
$\Phi_t(\varepsilon)-\Phi_t(\delta)$ is affine in $t$, the set
$C_\varepsilon$ is a finite intersection of open and closed half-spaces,
and therefore measurable.

Take \(t\in C_\varepsilon\) and put
\[
    x
    :=
    F(t)
    =
    t+v(\varepsilon).
\]
For \(j\in\{1,\ldots,n\}\), let \(\varepsilon^{(j)}\) be obtained
from \(\varepsilon\) by changing its \(j\)-th coordinate. By the
maximality of \(\varepsilon\),
\[
\begin{aligned}
    0
    &\leq
    \Phi_t(\varepsilon)
    -
    \Phi_t\bigl(\varepsilon^{(j)}\bigr)
    =
    4\left(
        \varepsilon_j
        \bigl(
            \langle x,a_j\rangle-\alpha_j
        \bigr)
        -
        |a_j|^2
    \right).
\end{aligned}
\]
Consequently,
\[
    \varepsilon_j
    \bigl(
        \langle x,a_j\rangle-\alpha_j
    \bigr)
    \geq
    |a_j|^2
\]
for every \(j\), and therefore \(x\in Q_\varepsilon\). This proves
$
    F(C_\varepsilon)
    \subseteq
    Q_\varepsilon.
$
\end{proof}

\begin{corollary}
\label{lem:bang-map-volume}
For every measurable set \(T\subset\mathbb{E}^d\), the set \(F(T)\)
is measurable,
\[
    \vol_d(F(T))
    =
    \vol_d(T),
\]
and \(F(T)\) is disjoint from all the open planks
\(P_1,\ldots,P_n\).
\end{corollary}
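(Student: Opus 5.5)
The plan is to decompose $T$ along the partition $\{C_\varepsilon\}$ and use that $F$ acts as a translation on each piece. Write
\[
    T=\bigsqcup_{\varepsilon\in\{\pm1\}^n}\bigl(T\cap C_\varepsilon\bigr),
\]
a finite disjoint union. By Lemma~\ref{lem:bang-cell-inclusion} each $C_\varepsilon$ is measurable, so each $T\cap C_\varepsilon$ is measurable. On $C_\varepsilon$ the map $F$ coincides with the translation $t\mapsto t+v(\varepsilon)$, hence
\[
    F(T)=\bigcup_{\varepsilon}\bigl((T\cap C_\varepsilon)+v(\varepsilon)\bigr).
\]
Translates of measurable sets are measurable with the same volume, so $F(T)$ is a finite union of measurable sets and is therefore measurable.

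For the volume identity, I need the pieces $(T\cap C_\varepsilon)+v(\varepsilon)$ to be pairwise disjoint. This is the one point needing care, namely injectivity of $F$ across different cells. By Lemma~\ref{lem:bang-cell-inclusion},
\[
    (T\cap C_\varepsilon)+v(\varepsilon)\subseteq F(C_\varepsilon)\subseteq Q_\varepsilon,
\]
and by Lemma~\ref{lem:Q-disjoint} the sets $Q_\varepsilon$ are pairwise disjoint. Hence the images of different cells are disjoint, and additivity together with translation invariance give
\[
    \vol_d(F(T))=\sum_\varepsilon \vol_d\bigl((T\cap C_\varepsilon)+v(\varepsilon)\bigr)=\sum_\varepsilon\vol_d(T\cap C_\varepsilon)=\vol_d(T).
\]

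For the last claim, the containment $F(T)\subseteq\bigcup_\varepsilon Q_\varepsilon$ follows from the same inclusion. By Lemma~\ref{lem:Q-disjoint} each $Q_\varepsilon$ misses every open plank $P_i$, so $F(T)$ does too. There is no real obstacle here: all the substance is already in the two preceding lemmas, and the only step to state explicitly is the disjointness of the translated pieces.
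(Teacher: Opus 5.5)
Your proposal is correct and follows the paper's proof: you decompose $T$ along the cells $C_\varepsilon$, use that $F$ acts on each cell as translation by $v(\varepsilon)$, and get both the disjointness of the translated pieces and the avoidance of the open planks from the inclusion into $Q_\varepsilon$ together with Lemma~\ref{lem:Q-disjoint}. You also state explicitly why $F(T)$ is measurable (it is a finite union of translates of measurable sets), which the paper leaves implicit.
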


\begin{proof}
Since the sets \(C_\varepsilon\) form a measurable partition of
\(\mathbb{E}^d\),
\[
    F(T)
    =
    \bigcup_{\varepsilon\in\{\pm 1\}^n}
    \big(
        (T\cap C_\varepsilon)+v(\varepsilon)
    \big).
\]
By Lemma~\ref{lem:bang-cell-inclusion},
$
    (T\cap C_\varepsilon)+v(\varepsilon)
    \subseteq
    Q_\varepsilon.
$
By Lemma~\ref{lem:Q-disjoint}, the sets
\(Q_\varepsilon\) are pairwise disjoint. Hence the sets in the union
above are also pairwise disjoint. Therefore,
\[
\begin{aligned}
    \vol_d(F(T))
    &=
    \sum_{\varepsilon\in\{\pm 1\}^n}
    \vol_d
    \big(
        (T\cap C_\varepsilon)+v(\varepsilon)
    \big)
    \\
    &=
    \sum_{\varepsilon\in\{\pm 1\}^n}
    \vol_d(T\cap C_\varepsilon)
    \\
    &=
    \vol_d(T).
\end{aligned}
\]
The same inclusion and the fact that each $Q_\varepsilon$ is disjoint from all the open planks show that $F(T)$ is also disjoint from all the open planks.
\end{proof}

\begin{lemma}\label{lem:bang-inner}
Let $K\subset\E^d$ be a convex body. Then
\[
    \vol_d\left(K\setminus\bigcup_{i=1}^nP_i\right)
    \geq
    \vol_d(K\div Z_{\mathcal P}).
\]
\end{lemma}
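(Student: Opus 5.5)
Take $T:=K\div Z_{\mathcal P}$ and apply the Bang map to it. Since $K$ is a convex body, $T$ is an intersection of finitely many translates of $K$. It is therefore closed and convex, and in particular measurable.

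By Corollary~\ref{lem:bang-map-volume}, $F(T)$ is measurable, satisfies $\vol_d(F(T))=\vol_d(T)$, and is disjoint from every open plank $P_i$. So it is enough to prove $F(T)\subseteq K$, because then
\[
    F(T)\subseteq K\setminus\bigcup_{i=1}^n P_i,
\]
and the claimed inequality follows by monotonicity of volume.

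For the inclusion, take $t\in T$. By definition $F(t)=t+v(\varepsilon(t))$, and $v(\varepsilon(t))=\sum_i\varepsilon_i(t)a_i$ is an element of $Z_{\mathcal P}=\sum_i\{-a_i,a_i\}$. The definition of Minkowski subtraction gives $t+Z_{\mathcal P}\subseteq K$, hence $F(t)\in K$. This argument does not use convexity of $T$ or Lemma~\ref{lem:minkowski-subtraction-concavity}; it only needs measurability of $T$, which holds here.

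The only step that needs any care is measurability, and it is settled by closedness. The case $T=\varnothing$ is trivial. The substantive work has already been done in Lemma~\ref{lem:bang-cell-inclusion} and Corollary~\ref{lem:bang-map-volume}: the map $F$ translates $T$ piecewise into the complement of the planks without overlap, and each translation vector lies in $Z_{\mathcal P}$, which keeps the image inside $K$.
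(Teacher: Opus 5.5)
Your proposal is correct and follows the paper's proof: apply Corollary~\ref{lem:bang-map-volume} to $T=K\div Z_{\mathcal P}$, then use $F(t)=t+v(\varepsilon(t))\in t+Z_{\mathcal P}\subseteq K$. Your remark that $T$ is measurable, being a finite intersection of translates of $K$, is a detail the paper leaves implicit.
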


\begin{proof}
Apply Corollary~\ref{lem:bang-map-volume} to $T:=K\div Z_{\mathcal P}$.
For every $\varepsilon\in\{\pm1\}^n$, we have $v(\varepsilon)\in Z_{\mathcal P}$, and hence
$t+v(\varepsilon)\in K$ for every $t\in T$. Thus $F(T)$ is contained in
$K\setminus\bigcup_{i=1}^n P_i$, while $F$ preserves volume.
\end{proof}

\subsection{Reduction to a single direction}

\begin{lemma}\label{lem:discrete-reduction}
Let $K\subset\E^d$ be a convex body, let $a_1,\ldots,a_n\in\E^d$, and put
\[
    Z:=\sum_{i=1}^n\{-a_i,a_i\},
    \qquad
    r:=\sum_{i=1}^n|a_i|.
\]
If $r>0$, then
\[
    \vol_d(K\div Z)
    \geq
    \min_{u\in S^{d-1}}
    \vol_d\bigl(K\div\{-ru,ru\}\bigr).
\]
\end{lemma}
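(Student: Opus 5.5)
Our plan is to write $Z$ as a convex combination of two-point sets and then invoke Lemma~\ref{lem:minkowski-subtraction-concavity} together with the Brunn--Minkowski inequality. We may discard every $a_i=0$, since such a term only contributes $\{0\}$ to the Minkowski sum. For the remaining indices set $u_i:=a_i/|a_i|\in S^{d-1}$ and $\lambda_i:=|a_i|/r$. Then $\lambda_i>0$, $\sum_i\lambda_i=1$, and
\[
    \{-a_i,a_i\}=\lambda_i\{-ru_i,ru_i\},
    \qquad\text{so}\qquad
    Z=\sum_{i}\lambda_i A_i,
    \quad A_i:=\{-ru_i,ru_i\}.
\]

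Lemma~\ref{lem:minkowski-subtraction-concavity} gives
\[
    \sum_i\lambda_i (K\div A_i)\subseteq K\div Z.
\]
If some $K\div A_i$ is empty, then the volume of $K\div A_i$ is $0$. In that case the minimum on the right-hand side of the claim is $0$ and there is nothing to prove. Otherwise each $K\div A_i=(K-ru_i)\cap(K+ru_i)$ is a nonempty compact convex set. The Brunn--Minkowski inequality in the form $\vol_d(\sum\lambda_iX_i)^{1/d}\geq\sum\lambda_i\vol_d(X_i)^{1/d}$ applies to nonempty compact sets, extended to finitely many sets by induction. It yields
\[
    \vol_d(K\div Z)^{1/d}\geq\sum_i\lambda_i\vol_d(K\div A_i)^{1/d}
    \geq \min_i \vol_d(K\div A_i)^{1/d}.
\]
Since each $u_i\in S^{d-1}$, this is at least the $1/d$-th power of the claimed minimum.

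Two small points remain. First, the minimum over $S^{d-1}$ is attained, because $u\mapsto\vol_d((K-ru)\cap(K+ru))$ is continuous; alternatively, one can read it as an infimum, which suffices. Second, $K\div Z$ is measurable, since it is a finite intersection of translates of $K$.

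There is no serious obstacle here. The only care needed is the degenerate case of empty or lower-dimensional difference bodies, where Brunn--Minkowski must be applied in its general form (volume $0$ allowed). The empty case is handled separately as above.
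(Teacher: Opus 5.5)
Your proof is correct and matches the paper's argument essentially step for step. You use the same rescaling $Z=\sum_i\lambda_i\{-ru_i,ru_i\}$ with $\lambda_i=|a_i|/r$, the same separate treatment of an empty $K\div\{-ru_i,ru_i\}$, and then Lemma~\ref{lem:minkowski-subtraction-concavity} followed by Brunn--Minkowski; your added remarks on the measurability of $K\div Z$ and the attainment of the minimum are harmless details that the paper leaves implicit.
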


\begin{proof}
Without loss of generality, none of the vectors $a_1,\ldots,a_n$ is zero. Write
$
    u_i:=\frac{a_i}{|a_i|},
$
$
    \lambda_i:=\frac{|a_i|}{r}.
$
Then $\lambda_i>0$, $\sum_{i=1}^n\lambda_i=1$, and
$
    Z=\sum_{i=1}^n\lambda_i \{-ru_i,ru_i\}.
$

If $K\div \{-ru_i,ru_i\}$ is empty for some $i$, then the right-hand side of the
claimed inequality is zero, so the result is immediate. Otherwise,
Lemma~\ref{lem:minkowski-subtraction-concavity} gives
\[
    \sum_{i=1}^n \lambda_i\bigl(K\div \{-ru_i,ru_i\}\bigr)
    \subseteq K\div Z.
\]
The Brunn--Minkowski inequality now yields
\begin{align*}
    \vol_d(K\div Z)^{1/d}
    &\geq
    \sum_{i=1}^n \lambda_i
    \vol_d\bigl(K\div\{-ru_i,ru_i\}\bigr)^{1/d} \\
    &\geq
    \min_{u\in S^{d-1}}
    \vol_d\bigl(K\div\{-ru,ru\}\bigr)^{1/d}.
\end{align*}

Raising to the $d$-th power proves the result.
\end{proof}

Combining the preceding two lemmas gives the common estimate used in both
special cases below.

\begin{corollary}\label{cor:uncovered-reduction}
Let $K\subset\E^d$ be a convex body, and let $P_1,\ldots,P_n$ be planks whose
total width is at most $W$. Put $R:=W/2$. Then
\[
    \vol_d\left(K\setminus\bigcup_{i=1}^nP_i\right)
    \geq
    \min_{u\in S^{d-1}}
    \vol_d\bigl(K\div\{-Ru,Ru\}\bigr).
\]
\end{corollary}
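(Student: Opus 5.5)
The corollary follows by chaining Lemma~\ref{lem:bang-inner} with Lemma~\ref{lem:discrete-reduction}, after a small adjustment for the total width being \emph{at most} $W$ rather than equal to it.

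Since closed planks differ from open ones only by boundary hyperplanes, which have measure zero, we may assume all $P_i$ are open. Write them as in the Bang map construction, with vectors $a_i\neq 0$, so that $w(P_i)=2|a_i|$ and the total width is $2r$, where $r:=\sum_i|a_i|\le R$. Lemma~\ref{lem:bang-inner} gives
\[
    \vol_d\Bigl(K\setminus\bigcup_{i=1}^n P_i\Bigr)\geq\vol_d(K\div Z_{\mathcal P}),
\]
where $Z_{\mathcal P}=\sum_{i=1}^n\{-a_i,a_i\}$. If $n=0$, the left-hand side is $\vol_d(K)$, which dominates the right-hand side trivially, so we may assume $r>0$.

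Next, Lemma~\ref{lem:discrete-reduction} yields
\[
    \vol_d(K\div Z_{\mathcal P})\geq\min_{u\in S^{d-1}}\vol_d\bigl(K\div\{-ru,ru\}\bigr).
\]
(The minimum exists: $u\mapsto\vol_d(K\div\{-ru,ru\})=\vol_d\bigl(K\cap(K-2ru)\bigr)$ is continuous on the sphere. Alternatively, one can simply read it as an infimum.)

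It remains to pass from $r$ to $R\ge r$. We use the monotonicity that for $0\le s\le R$,
\[
    K\div\{-su,su\}\supseteq K\div\{-Ru,Ru\}.
\]
Indeed, if $x\pm Ru\in K$, then $x\pm su$ lies on the segment between these two points, hence in $K$ by convexity. So for each fixed $u$ the volume is nonincreasing in the radius, and taking the minimum over $u$ preserves this. Chaining the three inequalities finishes the proof.

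There is no real obstacle here. The only points needing care are the reduction from closed to open planks, handled by the measure-zero boundaries, and the monotonicity step that replaces $r$ by $R$.
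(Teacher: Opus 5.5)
Your proposal is correct and matches the paper's proof essentially step for step. Like the paper, you pass to open planks by altering boundaries, chain Lemma~\ref{lem:bang-inner} with Lemma~\ref{lem:discrete-reduction} for $r=\sum_i|a_i|>0$, and then replace $r$ by $R\ge r$ using convexity, which makes $K\div\{-tu,tu\}$ decrease as $t$ increases. Your continuity remark on the existence of the minimum is a small addition that the paper leaves implicit.
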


\begin{proof}
After changing each plank on its boundary, write it in the normalized form
above and put $r:=\sum_{i=1}^n|a_i|$. Then $r\leq R$. If $r=0$, the assertion
is immediate. Otherwise, Lemmas~\ref{lem:bang-inner} and
\ref{lem:discrete-reduction} give
\[
    \vol_d\left(K\setminus\bigcup_{i=1}^n P_i\right)
    \geq
    \min_{u\in S^{d-1}}
    \vol_d\bigl(K\div\{-ru,ru\}\bigr).
\]
For fixed $u$, the sets $K\div\{-tu,tu\}$ decrease with respect to
inclusion as $t$ increases: if $0\leq s\leq t$ and $x\pm tu\in K$, then
convexity gives $x\pm su\in K$. Since $r\leq R$, the right-hand side is
at least the claimed minimum.
\end{proof}

The following lemma is essentially another key step in Bang's proof.

\begin{lemma}
\label{lem:translate-inclusion}
Let $K$ be a convex body with width $w = w(K)$, let \(0<r<w\) and let $u$ be any unit vector. Then some translate of
$
    \left(1-\frac{r}{w}\right)K
$
is contained in
$
    K\div\left\{-\frac{r}{2}u,\frac{r}{2}u\right\}.
$
\end{lemma}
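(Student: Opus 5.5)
Since $K\div\{-\tfrac r2u,\tfrac r2u\}=(K-\tfrac r2u)\cap(K+\tfrac r2u)$, it suffices to find a vector $c$ with
\[
    \Bigl(1-\frac{r}{w}\Bigr)K+c\pm\frac r2u\subseteq K .
\]
The plan is to use Bang's classical observation that $K$ contains a segment of length at least $w$ parallel to any prescribed direction. Let $h(K,\cdot)$ be the support function and $w_K(u)=h(K,u)+h(K,-u)\ge w$ the width of $K$ in direction $u$. Consider the set $K\cap(K+su)$. For $s<w_K(u)$ it is nonempty. The first thing to establish is that there is a chord $[p,p+\ell u]\subseteq K$ of length $\ell\ge w_K(u)\ge w$. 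This follows because the maximal chord length of $K$ in direction $u$ equals the width of $K$ in direction $u$, computed in the dual sense via the projection. If that equality feels delicate, the inequality I need can be obtained directly. Let $\ell(u)=\max\{s:K\cap(K+su)\neq\varnothing\}$. If $\ell<w_K(u)$, then $K$ and $K+\ell' u$ with $\ell<\ell'$ are disjoint convex bodies. They are separated by a hyperplane with normal $v$, which gives $h(K,v)\le -h(K,-v)+\ell'\langle u,v\rangle$, i.e.\ $w_K(v)\le \ell'\langle u,v\rangle\le\ell'$. Taking $\ell'\downarrow\ell$ yields $w\le w_K(v)\le \ell$. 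So in all cases there is a segment $[p,q]\subseteq K$ with $q-p=\ell u$ and $\ell\ge w$.

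Next I would build the translate by homothety. Put $\lambda:=r/\ell\le r/w<1$, and consider the two homotheties with ratio $1-\lambda$ centred at $p$ and at $q$:
\[
    H_p(K)=(1-\lambda)K+\lambda p,\qquad H_q(K)=(1-\lambda)K+\lambda q .
\]
Both lie in $K$ by convexity, since $p,q\in K$. Their translation vectors differ by $\lambda(q-p)=\lambda\ell u=ru$. Hence with $c:=\lambda\frac{p+q}{2}$ we get
\[
    (1-\lambda)K+c\pm\frac r2 u\subseteq K,
\]
i.e.\ $(1-\lambda)K+c\subseteq K\div\{-\tfrac r2u,\tfrac r2u\}$.

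Finally I would pass from ratio $1-\lambda$ to ratio $1-r/w\le 1-\lambda$. Since $0<1-r/w\le 1-\lambda$, the set $(1-\tfrac rw)K$ is a homothetic copy of $(1-\lambda)K$ with ratio $\mu:=(1-\tfrac rw)/(1-\lambda)\in(0,1]$. For any point $z$ of the convex set $M:=(1-\lambda)K+c$, the homothety $\mu M+(1-\mu)z$ lies in $M$, and $M$ in turn lies in the Minkowski difference. This gives the required translate.

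The main obstacle is the existence of a chord of length at least $w$ in every direction. I would handle it with the separation argument above, taking some care with the limiting step via compactness. The remaining steps are routine convexity.
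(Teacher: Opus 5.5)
Your proof is correct and is essentially the paper's argument: a separation argument gives a chord of $K$ parallel to $u$ of length at least $w$, and the two homothetic copies of $K$ with ratio $1-\rho$ centred at its endpoints differ by the translation $ru$, which yields the translate. The paper separates $K\pm\frac{w}{2}u$ directly to get a chord of length exactly $w$, so your final shrinking step from ratio $1-\lambda$ to $1-r/w$ is not needed.

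One correction: your aside that the maximal chord length in direction $u$ equals $w_K(u)$, and hence that $\ell\ge w_K(u)$, is false in general. For the unit square and $u=(\cos\theta,\sin\theta)$ with $0<\theta<\pi/4$, one has $\ell=1/\cos\theta<\cos\theta+\sin\theta=w_K(u)$. This claim is not load-bearing, though: your direct separation argument yields $\ell\ge w$, which is all you use.
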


\begin{proof}
First, there exist $x,y \in K$ such that $x-y = wu$. Indeed, otherwise the convex bodies $K-wu/2$ and $K+wu/2$ would be disjoint and could be strictly separated by a hyperplane. If $n$ is a unit normal
to such a hyperplane, then the width of $K$ in the direction $n$ would be less than
$
    w |\langle u,n\rangle|
    \leq w,
$
contrary to the definition of $w$ as the minimal width.

Put
$\rho:=\frac{r}{w},$
$c:=\frac{\rho}{2}(x+y).$
We show that \[
    c+(1-\rho)K
    \subseteq
    K\div\left\{-\frac{r}{2}u,\frac{r}{2}u\right\}.
\]

For every \(z\in K\), using
$
    \frac{r}{2}u=\frac{\rho}{2}(x-y),
$
we obtain
\[
    c+(1-\rho)z+\frac{r}{2}u
    =(1-\rho)z+\rho x\in K
\]
and
\[
    c+(1-\rho)z-\frac{r}{2}u
    =(1-\rho)z+\rho y\in K,
\]
since both points are convex combinations of points of \(K\). This proves the assertion.
\end{proof}

\section{Euclidean ball}

The reduction above is sharp for a ball because all directions are
equivalent and $B^d\div\{-ru,ru\}$ has the same volume as the part left
uncovered by the corresponding central plank.

\begin{lemma}\label{lem:ball-lens-plank}
Let $u\in S^{d-1}$ and $0<r<1$. Put
\[
    L_{r,u}:=B^d\div\{-ru,ru\}
\]
and let
\[
    P_{r,u}
    :=
    \{x\in\E^d:|\langle x,u\rangle|\leq r\}
\]
be the central plank of width $2r$ orthogonal to $u$. Then
\[
    \vol_d(L_{r,u})
    =
    \vol_d(B^d\setminus P_{r,u}).
\]
\end{lemma}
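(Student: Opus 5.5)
The plan is to compute both volumes by slicing perpendicular to $u$ and to match the slices directly. By definition, $L_{r,u}=(B^d-ru)\cap(B^d+ru)$, i.e.\ the lens of points $x$ with $|x-ru|\le 1$ and $|x+ru|\le 1$. Rotate coordinates so that $u=e_d$, and write $x=(y,s)$ with $y\in\mathbb{R}^{d-1}$, $s\in\mathbb{R}$. Then $x\in L_{r,u}$ if and only if $|y|^2+(s-r)^2\le1$ and $|y|^2+(s+r)^2\le1$, which is equivalent to $|y|^2+(|s|+r)^2\le 1$.

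First I treat the upper half $s\ge 0$. There the condition reads $|y|^2+(s+r)^2\le1$, so substituting $\sigma=s+r$ maps $L_{r,u}\cap\{s\ge0\}$ by a translation (volume preserving) onto $\{(y,\sigma):\sigma\ge r,\ |y|^2+\sigma^2\le 1\}=B^d\cap\{\langle x,u\rangle\ge r\}$. That is exactly the upper cap of $B^d\setminus P_{r,u}$, up to the boundary hyperplane $\langle x,u\rangle=r$, which has measure zero. Symmetrically, the part with $s\le0$ is translated by $-ru$ onto the lower cap $B^d\cap\{\langle x,u\rangle\le -r\}$. The hyperplane $s=0$ is null, so adding the two pieces gives $\vol_d(L_{r,u})=\vol_d(B^d\setminus P_{r,u})$, where $P_{r,u}$ being closed only affects a null set. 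In short, $L_{r,u}$ is obtained from the two caps by sliding them together by $r$ each.

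There is no real obstacle. The only points to check are the equivalence of the two ball inequalities with the single inequality in $|s|+r$, which holds because $\max\{(s-r)^2,(s+r)^2\}=(|s|+r)^2$ for $r>0$, and the bookkeeping of boundaries, which are measure-zero sets. The assumption $r<1$ guarantees that both sides are nonempty but is not otherwise needed.
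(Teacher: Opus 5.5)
Your proof is correct and takes essentially the same route as the paper: split $L_{r,u}$ along $u^\perp$ and note that translating the two halves by $\pm ru$ gives exactly the caps $B^d\cap\{\pm\langle x,u\rangle\ge r\}$, with all boundary discrepancies of measure zero. The only difference is how this is checked: you use coordinates and the identity $\max\{(s-r)^2,(s+r)^2\}=(|s|+r)^2$, whereas the paper verifies directly that $|x-2ru|^2<|x|^2$ whenever $\langle x,u\rangle>r$.
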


\begin{proof}
Let
$
    u^\perp:=\{x\in\E^d:\langle x,u\rangle=0\}.
$
We split both $L_{r,u}$ and $B^d\setminus P_{r,u}$ into the two parts
lying on opposite sides of $u^\perp$, and show that the corresponding parts are translates of each other; see Figure \ref{fig:ball-lens}.

We claim that
$
    L^++ru
    =
    B^+,
$
where
\[
    L^+
    :=
    L_{r,u}\cap\{x:\langle x,u\rangle>0\},
\qquad
    B^+
    :=
    B^d\cap\{x:\langle x,u\rangle>r\}.
\]

Indeed, for $x\in\E^d$,
\[
\begin{aligned}
    x\in L^++ru
    &\Longleftrightarrow
    x-ru\in L_{r,u}
    \ \text{and}\
    \langle x-ru,u\rangle>0
    \\
    &\Longleftrightarrow
    x\in B^d,\quad
    x-2ru\in B^d,\quad
    \langle x,u\rangle>r
    \\
    &\Longleftrightarrow
    x\in B^d
    \ \text{and}\
    \langle x,u\rangle>r.
\end{aligned}
\]
For the last equivalence, if $x\in B^d$ and $\langle x,u\rangle>r$, then
\[
    |x-2ru|^2
    =
    |x|^2-4r\langle x,u\rangle+4r^2
    <
    |x|^2
    \leq 1.
\]

Therefore,
$
    L^++ru
    =
    B^+.
$
Similarly,
$
    L^--ru
    =
    B^-,
$
where
\[
    L^-
    :=
    L_{r,u}\cap\{x:\langle x,u\rangle<0\},
\qquad
    B^-
    :=
    B^d\cap\{x:\langle x,u\rangle<-r\}.
\]

Since 
$
    L_{r,u}\setminus\bigl(L^+\cup L^-\bigr)
    \subset u^\perp
$
has measure zero, and
$B^d\setminus P_{r,u}$ is a disjoint union of $B^+$ and $B^-$,
the result follows.
\end{proof}

\begin{figure}
    \centering
    \begin{tikzpicture}[
    line cap=round,
    line join=round,
    every node/.style={font=\small}
]
    \def\ballradius{2.00}
    \def\lensshift{1.10}
    \pgfmathsetmacro{\plankhalfheight}{sqrt(\ballradius*\ballradius-\lensshift*\lensshift)}

    % Central plank and the two caps, clipped to the ball.
    \begin{scope}
        \clip (0,0) circle (\ballradius);
        \fill[black!7]  (-\lensshift,-\ballradius)
            rectangle (\lensshift,\ballradius);
        \fill[black!20] (-\ballradius,-\ballradius)
            rectangle (-\lensshift,\ballradius);
        \fill[black!60] (\lensshift,-\ballradius)
            rectangle (\ballradius,\ballradius);
    \end{scope}

    % Lens L_{r,u}, split by u^\perp.
    \begin{scope}
        \clip (-\lensshift,0) circle (\ballradius);
        \clip ( \lensshift,0) circle (\ballradius);
        \fill[black!20] (-\ballradius-\lensshift,-\ballradius)
            rectangle (0,\ballradius);
        \fill[black!60] (0,-\ballradius)
            rectangle (\ballradius+\lensshift,\ballradius);
    \end{scope}

    % Boundaries of the two translated balls defining the lens.
    \draw[black!35,thick,dash pattern=on 5pt off 4pt]
        (-\lensshift,0) circle (\ballradius);
    \draw[black!35,thick,dash pattern=on 5pt off 4pt]
        ( \lensshift,0) circle (\ballradius);

    % Original ball and plank boundaries.
    \draw[black,very thick] (0,0) circle (\ballradius);
    \draw[black!65,thick]
        (-\lensshift,-\plankhalfheight) -- (-\lensshift,\plankhalfheight);
    \draw[black!65,thick]
        ( \lensshift,-\plankhalfheight) -- ( \lensshift,\plankhalfheight);

    % Separating hyperplane and marked points.
    \draw[black!65,thick]
        (0,-\ballradius-0.18) -- (0,\ballradius+0.18)
        node[above,black] {$u^\perp$};

    \fill (0,0) circle (1.5pt);
    \node[anchor=east,xshift=-2pt,yshift=-4pt] at (0,0) {$0$};

    \fill (-\lensshift,0) circle (1.5pt);
    \fill ( \lensshift,0) circle (1.5pt);
    \node[anchor=east,xshift=+1pt,yshift=-4pt]
        at (-\lensshift,0) {$-ru$};
    \node[anchor=west,xshift=3pt,yshift=-4pt, text=white] at (\lensshift,0) {$ru$};

    % Region labels.
    \node at (-0.47,0.86) {$L^-$};
    \node[white] at (0.4,0.86) {$L^+$};
    \node at (-1.50,0.86) {$B^-$};
    \node[white] at (1.50,0.86) {$B^+$};
    \node at (0.35,-\ballradius-0.35) {$B^d$};
\end{tikzpicture}
    \caption{The two parts
    \(L^-\) and \(L^+\) are translates of 
    \(B^-\) and \(B^+\).}
    \label{fig:ball-lens}
\end{figure}

\begin{theorem}\label{thm:main}
For the unit ball, among all finite collections of planks whose total width
$W<2$, the largest covered volume is attained by a single plank centered at
the origin of width~$W$.
\end{theorem}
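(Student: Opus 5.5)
The plan is to bound the uncovered volume from below and match it with the single central plank. Let $P_1,\ldots,P_n$ be planks of total width $W<2$, and put $R:=W/2<1$. Corollary~\ref{cor:uncovered-reduction}, applied with $K=B^d$, gives
\[
    \vol_d\left(B^d\setminus\bigcup_{i=1}^nP_i\right)
    \geq
    \min_{u\in S^{d-1}}\vol_d\bigl(B^d\div\{-Ru,Ru\}\bigr).
\]
Because $B^d$ is rotation invariant, the volume $\vol_d(L_{R,u})$ does not depend on $u$, so the minimum equals $\vol_d(L_{R,u})$ for any fixed unit vector $u$. Lemma~\ref{lem:ball-lens-plank} then identifies this with $\vol_d(B^d\setminus P_{R,u})$, the volume left uncovered by the central plank of width $2R=W$.

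Next we subtract from $\vol_d(B^d)$. This gives
\[
    \vol_d\left(B^d\cap\bigcup_i P_i\right)\leq \vol_d(B^d\cap P_{R,u}).
\]
Boundaries of planks have measure zero, so it does not matter whether the planks are taken open or closed. The single central plank of width $W$ attains this bound, which proves Theorem~\ref{thm:main}.

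We should also check that $R=0$ causes no trouble and that the corollary's hypothesis ``total width at most $W$'' matches the setting. If the total width is less than $W$, the same bound still holds, since the corollary was stated with an inequality. The case $R>0$ then falls within the hypothesis $0<r<1$ of Lemma~\ref{lem:ball-lens-plank}.

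Since all the heavy lifting (the Bang map, the Brunn--Minkowski reduction, and the lens computation) has already been done, the argument is essentially a chain of citations. The only point needing care is the rotation-invariance step: it must genuinely hold that $\vol_d(B^d\div\{-Ru,Ru\})$ is independent of $u$. This follows from $O(B^d\div A)=B^d\div OA$ for every orthogonal map $O$.
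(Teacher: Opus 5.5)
Your proposal is correct and follows the paper's proof essentially step for step: you apply Corollary~\ref{cor:uncovered-reduction} with $K=B^d$, use rotation invariance to drop the minimum over $u$, invoke Lemma~\ref{lem:ball-lens-plank} to identify the lens volume with the part left uncovered by the central plank of width $W$, and take complements in $B^d$. Your extra remarks fill in small details the paper leaves implicit and do not change the argument: the orthogonal invariance $O(B^d\div A)=B^d\div OA$, the measure-zero boundaries, and the ``total width at most $W$'' slack.
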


\begin{proof}
Let the total width of the given planks be $W<2$, and put $r:=W/2$. By
Corollary~\ref{cor:uncovered-reduction},
\[
    \vol_d\left(B^d\setminus\bigcup_{i=1}^n P_i\right)
    \geq
    \min_{u\in S^{d-1}}\vol_d(B^d\div\{-ru,ru\}).
\]
Rotational symmetry makes the quantity on the right independent of $u$.
By Lemma~\ref{lem:ball-lens-plank}, for every $u\in S^{d-1}$ it equals
$
    \vol_d(B^d\setminus P_{r,u}),
$
where $P_{r,u}$ is the central plank of width $2r=W$. Taking complements in
$B^d$ gives
\[
    \vol_d\left(B^d\cap\bigcup_{i=1}^n P_i\right)
    \leq
    \vol_d(B^d\cap P_{r,u}),
\]
which proves the theorem.
\end{proof}

We also show that, for $d\geq 2$, every optimal arrangement of planks for the ball $B^d$ has, up to boundaries, the same union as a single central plank. If equality holds in the final estimate, then
equality must also hold in the Brunn--Minkowski step, which implies that all the summands are homothetic. The bodies
$B^d\div\{-ru_i,ru_i\}$
have equal volume and are symmetric about the origin, so if they are homothetic then they are in fact equal. It follows that all the planks are
parallel. In the parallel case, the strict monotonicity of the volumes of the
sections of the ball implies that equality can occur only when the union of
the planks is the central plank of width~$W$. This completes the solution of Bezdek's problem~\cite[Problem~5.1]{Bezdek2013}.

\section{Planar convex bodies}

For planar convex bodies, the same reduction is sharp for a different
reason: the set obtained by subtracting a pair of points can be cut and translated
to form the part lying outside a suitable plank.

\begin{lemma}\label{lem:planar-cap-lens}
Let $K\subset\E^2$ be a convex body, and let $z\in\E^2$ satisfy
$0<|z|<w(K)$.  Then there is a plank $P_z$ of width at most $|z|$ such
that
\[
    \vol_2(K\setminus P_z)
    =
    \vol_2\bigl(K\div\{-z/2,z/2\}\bigr).
\]
\end{lemma}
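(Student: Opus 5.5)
The plan is to imitate the proof of Lemma~\ref{lem:ball-lens-plank}: cut the lens $L:=K\div\{-z/2,z/2\}=(K-z/2)\cap(K+z/2)$ along a suitable line and translate the two pieces by $\pm z/2$, so that they become the two caps of $K$ left outside a plank. For a ball the cutting line is $u^\perp$. For a general planar body the cutting line has to be adapted to $K$ and need not be orthogonal to $z$; this is why the lemma only promises a plank of width \emph{at most} $|z|$.

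First I would establish the structure of $L$. It is a convex set, and it is nonempty: by the argument in the proof of Lemma~\ref{lem:translate-inclusion}, since $|z|<w(K)$ there are $x,y\in K$ with $x-y=z$, so $(x+y)/2\in L$. If $L$ has empty interior, I would handle that degenerate case separately. Otherwise, the boundaries of the two translates $K-z/2$ and $K+z/2$ meet in exactly two connected pieces. I would invoke the classical fact that the boundaries of two translates of a planar convex body intersect in at most two connected components, each a point or a segment. I then pick points $p,q$, one from each piece, so that $\partial L$ consists of two arcs from $p$ to $q$. One arc lies in $\partial(K-z/2)$; call it the $+$ side, since translating it by $+z/2$ lands in $\partial K$. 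The other arc lies in $\partial(K+z/2)$; call it the $-$ side. Let $\ell_0$ be the line through $p$ and $q$, and let $n$ be a unit normal to $\ell_0$.

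Next I would define the two halves $L^{+}$ and $L^{-}$ of $L$ cut off by $\ell_0$, and verify two identities. The first is that $L^++z/2$ equals the cap of $K$ cut off by the line $\ell_0+z/2$ on the far side. The second is that $L^--z/2$ equals the cap of $K$ cut off by $\ell_0-z/2$ on the opposite side. For the first, $L^++z/2\subseteq K$ because $L\subseteq K-z/2$. Its boundary consists of the segment $[p,q]+z/2$, whose endpoints lie on $\partial K$, together with the translated $+$ arc, which is exactly the arc of $\partial K$ between $p+z/2$ and $q+z/2$ on that side. A convex region bounded by a chord of $K$ and the corresponding boundary arc is precisely the cap. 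The second identity is symmetric.

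Finally I would take $P_z$ to be the plank between $\ell_0-z/2$ and $\ell_0+z/2$. Its width is $|\langle z,n\rangle|\le|z|$. The two caps lie on opposite sides of $P_z$, so they are disjoint, and $K\setminus P_z$ is their union up to boundary. Since $\ell_0$ has measure zero, $\vol_2(K\setminus P_z)=\vol_2(L^+)+\vol_2(L^-)=\vol_2(L)$.

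I expect the main obstacle to be the topological step: showing rigorously that $\partial L$ splits into exactly two arcs, one on each translated boundary, and that the $+$ arc lies entirely on the correct side of $\ell_0$. This includes the degenerate cases where the intersection pieces are segments or where $\ell_0$ is parallel to $z$; in the latter case the plank has width $0$, and the lens must then have zero area. For this step I would rely on the two-component intersection fact for translates, and I would use convexity to order the points along $\partial L$.
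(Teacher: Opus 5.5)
Your construction is exactly the paper's. A point $x$ lies in $L$ precisely when the section of $K$ by the line $x+\mathbb{R}z$ contains $[x-z/2,x+z/2]$. Hence the two components of $\partial(K-z/2)\cap\partial(K+z/2)$ are $L\cap\ell_1$ and $L\cap\ell_2$, where $\ell_1,\ell_2$ are the support lines of $L$ parallel to $z$. So your $p,q,\ell_0$ are the paper's $m_1,m_2,H$, and the plank and the translations by $\pm z/2$ are the same.

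Two steps, however, are not in order. The first is the degenerate case, which cannot be ``handled separately.'' If $\mathrm{int}\,L=\varnothing$, the lemma would demand a plank of width at most $|z|<w(K)$ whose complement in $K$ has zero area, which is impossible. So you must \emph{prove} $\mathrm{int}\,L\neq\varnothing$. This is where the strict inequality $|z|<w(K)$ is really used; the nonemptiness of $L$, which is all you derive, is not enough. The paper argues that otherwise $\mathrm{int}(K-z/2)$ and $\mathrm{int}(K+z/2)$ are separated by a line with unit normal $n$, so the width of $K$ in direction $n$ is at most $|\langle z,n\rangle|\le|z|$. Your closing remark also has this case backwards: a plank of width $0$ removes nothing, so it gives $\vol_2(K\setminus P_z)=\vol_2(K)$, not $0$. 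Once $\mathrm{int}\,L\neq\varnothing$, one has $\ell_1\neq\ell_2$, so $\ell_0\parallel z$ never occurs.

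The second problem is that the step you call the main obstacle is left unproved, and the tool you propose does not settle it. You need the arc on $\partial(K-z/2)$ to lie on the side of $\ell_0$ toward $z$. Otherwise $L^++z/2$ does not lie beyond the boundary line $\ell_0+z/2$ of $P_z$, and the decomposition of $K\setminus P_z$ breaks down. The two-component fact gives you the two arcs but says nothing about which side of $\ell_0$ each one occupies.

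The paper sidesteps the topology by slicing with lines parallel to $z$. Taking $z$ vertical, a section $[b,t]$ of $K$ of length at least $|z|$ becomes the section $[b+|z|/2,\,t-|z|/2]$ of $L$. So over the interval between $\ell_1$ and $\ell_2$, the arc on $\partial(K-z/2)$ is the graph of the concave function $t-|z|/2$. The other arc is the graph of the convex function $b+|z|/2$. A concave function that is at least the height of the chord $[m_1,m_2]$ at both endpoints stays above the chord, and symmetrically for the convex one; this is exactly the orientation you need.

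With these two repairs your argument goes through. Your identification of $L^++z/2$ and $L^--z/2$ with the two caps then holds up to segments on the boundary lines of $P_z$. Exact equality can fail when $K$ has an edge on such a line, but this does not affect the area count.
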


\begin{figure}
    \centering
    \begingroup
\begin{tikzpicture}[
  x=1.08cm,y=1.08cm,
  line cap=round,line join=round,
  every node/.style={font=\small,text=black},
  endpoint/.style={circle,fill=black,inner sep=1.25pt},
  chord/.style={black,very thick},
  guide/.style={black!55,densely dashed,thick}
]

\def\vclA{4.3}
\def\vclD{5.4}
\def\vclEll{3.45}
\pgfmathsetmacro{\vclXs}{\vclA*sqrt(1-(\vclEll/\vclD)^2)}
\pgfmathsetmacro{\vclCL}{0.13*(-\vclXs)+0.035*\vclXs*\vclXs-0.20}
\pgfmathsetmacro{\vclCR}{0.13*( \vclXs)+0.035*\vclXs*\vclXs-0.20}
\pgfmathsetmacro{\vclYLb}{\vclCL-\vclEll/2}
\pgfmathsetmacro{\vclYLt}{\vclCL+\vclEll/2}
\pgfmathsetmacro{\vclYRb}{\vclCR-\vclEll/2}
\pgfmathsetmacro{\vclYRt}{\vclCR+\vclEll/2}
\pgfmathsetmacro{\vclSlope}{(\vclYRb-\vclYLb)/(2*\vclXs)}
\def\vclXmin{-4.85}
\def\vclXmax{4.85}
\def\vclYmin{-3.15}

% Convex body K.
\path[fill=black!8]
  plot[domain=-\vclA:\vclA,samples=180,smooth]
    (\x,{0.13*\x+0.035*\x*\x-0.20
          +2.7*sqrt(max(0,1-(\x/\vclA)^2))})
  --
  plot[domain=\vclA:-\vclA,samples=180,smooth]
    (\x,{0.13*\x+0.035*\x*\x-0.20
          -2.7*sqrt(max(0,1-(\x/\vclA)^2))})
  -- cycle;

% The two parts L^+ and L^- (separated by H).
\path[fill=black!22]
  plot[domain=-\vclXs:\vclXs,samples=140,smooth]
    (\x,{0.13*\x+0.035*\x*\x-0.20
          +2.7*sqrt(max(0,1-(\x/\vclA)^2))-\vclEll/2})
  --
  plot[domain=\vclXs:-\vclXs,samples=140,smooth]
    (\x,{0.13*\x+0.035*\x*\x-0.20
          -2.7*sqrt(max(0,1-(\x/\vclA)^2))+\vclEll/2})
  -- cycle;

% Boundary of K and the two outer boundary arcs of L.
\draw[black,very thick]
  plot[domain=-\vclA:\vclA,samples=180,smooth]
    (\x,{0.13*\x+0.035*\x*\x-0.20
          +2.7*sqrt(max(0,1-(\x/\vclA)^2))})
  --
  plot[domain=\vclA:-\vclA,samples=180,smooth]
    (\x,{0.13*\x+0.035*\x*\x-0.20
          -2.7*sqrt(max(0,1-(\x/\vclA)^2))})
  -- cycle;

\draw[black!65,very thick]
  plot[domain=-\vclXs:\vclXs,samples=140,smooth]
    (\x,{0.13*\x+0.035*\x*\x-0.20
          +2.7*sqrt(max(0,1-(\x/\vclA)^2))-\vclEll/2});
\draw[black!65,very thick]
  plot[domain=-\vclXs:\vclXs,samples=140,smooth]
    (\x,{0.13*\x+0.035*\x*\x-0.20
          -2.7*sqrt(max(0,1-(\x/\vclA)^2))+\vclEll/2});

% The two vertical lines ell_1 and ell_2.
\pgfmathsetmacro{\vclYTopLeft}{\vclYLt+0.35}
\pgfmathsetmacro{\vclYTopRight}{\vclYRt+0.35}
\draw[guide] (-\vclXs,\vclYmin) -- (-\vclXs,\vclYTopLeft);
\draw[guide] ( \vclXs,\vclYmin) -- ( \vclXs,\vclYTopRight);
\node[below] at (-\vclXs,\vclYmin) {$\ell_1$};
\node[below] at ( \vclXs,\vclYmin) {$\ell_2$};

% The two vertical sections of length |z|.
\coordinate (vclMOne) at (-\vclXs,\vclCL);
\coordinate (vclMTwo) at ( \vclXs,\vclCR);
\draw[chord] (-\vclXs,\vclYLb) -- (-\vclXs,\vclYLt);
\draw[chord] ( \vclXs,\vclYRb) -- ( \vclXs,\vclYRt);

% The lines H-z/2, H, and H+z/2.
\draw[black,very thick]
  (\vclXmin,{\vclYLb+\vclSlope*(\vclXmin+\vclXs)}) --
  (\vclXmax,{\vclYLb+\vclSlope*(\vclXmax+\vclXs)})
  node[pos=0.96,below right,fill=white,inner sep=1pt] {$H-z/2$};
\draw[black!65,thick]
  (\vclXmin,{\vclCL+\vclSlope*(\vclXmin+\vclXs)}) --
  (\vclXmax,{\vclCL+\vclSlope*(\vclXmax+\vclXs)})
  node[pos=0.96,below right,fill=white,inner sep=1pt] {$H$};
\draw[black,very thick]
  (\vclXmin,{\vclYLt+\vclSlope*(\vclXmin+\vclXs)}) --
  (\vclXmax,{\vclYLt+\vclSlope*(\vclXmax+\vclXs)})
  node[pos=0.96,below right,fill=white,inner sep=1pt] {$H+z/2$};

% Midpoints and the four endpoints of the extreme sections.
\node[endpoint] at (vclMOne) {};
\node[endpoint] at (vclMTwo) {};
\node[above left=1pt] at (vclMOne) {$m_1$};
\node[right=3pt,yshift=-6pt] at (vclMTwo) {$m_2$};

\node[endpoint] at (-\vclXs,\vclYLb) {};
\node[endpoint] at (-\vclXs,\vclYLt) {};
\node[endpoint] at ( \vclXs,\vclYRb) {};
\node[endpoint] at ( \vclXs,\vclYRt) {};

\node[below left=3pt,xshift=2pt] at (-\vclXs,\vclYLb) {$m_1-z/2$};
\node[above left=1pt] at (-\vclXs,\vclYLt) {$m_1+z/2$};
\node[above left=1pt,yshift=-2pt] at ( \vclXs,\vclYRb) {$m_2-z/2$};
\node[below left=3pt,xshift=3pt] at ( \vclXs,\vclYRt) {$m_2+z/2$};

% Region labels.
\node at (-2,1.10) {$K$};
\node[text=black] at ( 0.12,0.50) {$L^+$};
\node[text=black] at (-0.15,-0.60) {$L^-$};

\end{tikzpicture}
\endgroup
    \caption{Illustration of the planar case.}
    \label{fig:vertical-chord}
\end{figure}

\begin{proof}
Put
\[
    L:=K\div\{-z/2,z/2\}
    =(K-z/2)\cap(K+z/2).
\]

First, $L$ has nonempty interior. Indeed, otherwise the interiors of
$K-z/2$ and $K+z/2$ could be separated by a line. If $n$ is a unit normal
to such a line, then the width of $K$ in the direction $n$ would be at most
$
    |\langle z,n\rangle|
    \leq |z|,
$
contrary to $|z|<w(K)$.

Assume that $z$ is vertical; see Figure \ref{fig:vertical-chord}. Consider the vertical
sections of $K$. Since $L$ has nonempty interior, the set of vertical lines
whose intersection with $K$ contains a line segment of length $|z|$ is a
nondegenerate closed interval in the family of vertical lines.

Let $\ell_1$ and $\ell_2$ be the two boundary lines of this interval. Write
the sections $K\cap\ell_1$ and $K\cap\ell_2$ 
as
\[
    [m_1-z/2,m_1+z/2]
    \qquad\text{and}\qquad
    [m_2-z/2,m_2+z/2].
\]
(If a section is larger than $|z|$, we select its part of length $|z|$.)
All four endpoints of these two line segments belong to $\partial K$,
while their midpoints $m_1$ and $m_2$ belong to $\partial L$.

Let $H$ be the line through $m_1$ and $m_2$. Since $L$ is convex,
$
    [m_1,m_2]\subset L.
$
This line segment divides $L$ into two closed parts, denoted by $L^-$ and
$L^+$. Their outer boundary arcs lie in
$\partial(K+z/2)$ and $\partial(K-z/2),$
respectively.

Let $P_z$ be the open plank bounded by
$H-z/2$ and $H+z/2.$
Its width is at most $|z|$. Moreover, by convexity, up to boundary
line segments,
\[
    K\setminus P_z
    =
    (L^--z/2)
    \mathbin{\sqcup}
    (L^++z/2).
\]
Indeed, the translations by $-z/2$ and $z/2$ send the two outer boundary
arcs of $L$ to $\partial K$ and send $[m_1,m_2]$ to the two boundary
lines of $P_z$.

Since $L^-\cap L^+\subset H$ has zero area,
\[
    \vol_2(K\setminus P_z)
    =
    \vol_2(L^-)+\vol_2(L^+)
    =
    \vol_2(L).
    \qedhere
\]
\end{proof}

\begin{theorem}\label{thm:planar-bodies}
Let $K\subset\E^2$ be a convex body and let $0<W<w(K)$.  Among all finite
families of planks whose total width is at most $W$, the maximum of
\[
    \vol_2\left(K\cap\bigcup_{i=1}^n P_i\right)
\]
is attained by a single plank of width $W$.
\end{theorem}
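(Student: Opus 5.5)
The plan mirrors the proof of Theorem~\ref{thm:main}, with Lemma~\ref{lem:planar-cap-lens} playing the role that Lemma~\ref{lem:ball-lens-plank} played for the ball.

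Fix a family $P_1,\ldots,P_n$ of planks with total width at most $W$, and put $R:=W/2$. Corollary~\ref{cor:uncovered-reduction} gives
\[
    \vol_2\left(K\setminus\bigcup_{i=1}^nP_i\right)
    \geq
    \min_{u\in S^{1}}\vol_2\bigl(K\div\{-Ru,Ru\}\bigr).
\]
The minimum is attained. The function $u\mapsto\vol_2(K\div\{-Ru,Ru\})$ is continuous on the compact circle $S^1$, since
\[
    K\div\{-Ru,Ru\}=(K-Ru)\cap(K+Ru)
\]
varies continuously in the Hausdorff metric. This uses that the intersection has nonempty interior for $2R=W<w(K)$, which was shown at the start of the proof of Lemma~\ref{lem:planar-cap-lens}. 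Alternatively, we can avoid continuity by using an infimum and a minimizing sequence, but attainment is cleaner.

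Let $u_0$ be a minimizer and set $z:=2Ru_0$. Then $0<|z|=W<w(K)$, so Lemma~\ref{lem:planar-cap-lens} yields a plank $P_z$ of width at most $W$ with
\[
    \vol_2(K\setminus P_z)=\vol_2\bigl(K\div\{-z/2,z/2\}\bigr)=\vol_2\bigl(K\div\{-Ru_0,Ru_0\}\bigr).
\]
Combining this with the first display, $\vol_2\bigl(K\setminus\bigcup P_i\bigr)\ge \vol_2(K\setminus P_z)$. Taking complements in $K$ gives
\[
    \vol_2\Bigl(K\cap\bigcup P_i\Bigr)\le\vol_2(K\cap P_z).
\]

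Next we enlarge $P_z$ to a plank $P$ of width exactly $W$ with the same boundary direction and $P\supseteq P_z$. Then $\vol_2(K\cap P)\ge\vol_2(K\cap P_z)$. The single plank $P$ is itself an admissible family, so it attains the maximum.

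The only delicate point is the existence of the minimizer, that is, continuity in $u$. If that is troublesome, we can instead take $u_k$ with $\vol_2(K\div\{-Ru_k,Ru_k\})$ decreasing to the infimum. Applying Lemma~\ref{lem:planar-cap-lens} to each $u_k$ gives planks $P^{(k)}$ of width $W$ with $\vol_2(K\cap P^{(k)})$ converging to at least $\sup_{\mathcal P}\vol_2\bigl(K\cap\bigcup P_i\bigr)$. The planks meeting $K$ form a compact family, and $\vol_2(K\cap P)$ is continuous in the plank, so a limit plank achieves the supremum.
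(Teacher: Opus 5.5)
Your proposal is correct and follows the paper's proof essentially step for step: you take the lower bound from Corollary~\ref{cor:uncovered-reduction}, choose a minimizing direction $u_0$, apply Lemma~\ref{lem:planar-cap-lens} with $z=Wu_0$, and enlarge the resulting plank to width exactly $W$. There are two minor differences. You finish by the monotonicity $P\supseteq P_z$, whereas the paper re-applies the corollary to $\{P\}$ to obtain equality. You also justify the continuity in $u$ (via Hausdorff continuity of intersections with nonempty interior), which the paper only asserts.
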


\begin{proof}
Put
\[
    r:=W/2,
    \qquad
    m_K(W):=
    \min_{u\in S^1}
    \vol_2\bigl(K\div\{-ru,ru\}\bigr).
\]
The minimum exists by compactness of $S^1$ and continuity with respect to
$u$. Choose $u_0\in S^1$ attaining the minimum, and apply
Lemma~\ref{lem:planar-cap-lens} with $z=Wu_0$.  We obtain a plank $P_0$ of
width at most $W$ such that
$
    \vol_2(K\setminus P_0)=m_K(W).
$
If necessary, enlarge $P_0$, keeping the same direction, to a plank $P$ of
width exactly $W$.  Then
$
    \vol_2(K\setminus P)\leq m_K(W).
$
On the other hand, applying Corollary~\ref{cor:uncovered-reduction} to the one-plank
family $\{P\}$ gives
$
    \vol_2(K\setminus P)\geq m_K(W).
$
Thus equality holds.

Now let $P_1,\ldots,P_n$ be any finite family of planks with total width at
most $W$.  Corollary~\ref{cor:uncovered-reduction} gives
\[
    \vol_2\left(K\setminus\bigcup_{i=1}^n P_i\right)
    \geq
    m_K(W)
    =
    \vol_2(K\setminus P).
\]
Equivalently,
\[
    \vol_2\left(K\cap\bigcup_{i=1}^n P_i\right)
    \leq
    \vol_2(K\cap P),
\]
so the single plank $P$ is optimal.
\end{proof}

\section{Discussion}

Corollary~\ref{cor:uncovered-reduction} also allows us to prove
Conjecture~\ref{conj:fixed-width} for a special class of simplices.
Let \(S\) be a nondegenerate simplex such that, for some facet \(F_0\),
the altitude \(h\) corresponding to \(F_0\) equals \(w(S)\).
(For example, among regular simplices in dimensions \(d\geq 2\), only the regular
triangle has this property.)

Applying Lemma \ref{lem:translate-inclusion} to $S$, for $0 < W < h$,
\[
    \vol_d\left(S\div\left\{-\frac{W}{2}u,\frac{W}{2}u\right\}\right)
    \geq
    \left(1-\frac{W}{h}\right)^d\vol_d(S).
\]

Corollary~\ref{cor:uncovered-reduction} consequently implies that every
family of planks \(P_1,\ldots,P_n\) of total width at most \(W\) satisfies
\[
    \vol_d\left(S\setminus\bigcup_{i=1}^n P_i\right)
    \geq
    \left(1-\frac{W}{h}\right)^d\vol_d(S).
\]

On the other hand, a single plank of width \(W\), parallel and adjacent
to \(F_0\), leaves uncovered a simplex homothetic to \(S\) with ratio
\(1-\frac{W}{h}\). 
Thus the bound is attained, and this single plank is optimal.

\section*{}

\subsection*{Funding}
E.B. is supported by the DNRF Chair grant 20231101-28697.
A.P. is partially supported by the NSF grant DMS 2349045.

\subsection*{Declaration on the Use of AI}
The use of the concavity of Minkowski subtraction (within the existing Minkowski subtraction approach by the authors) and the idea for the planar case, emerged through iterative discussions between the authors and ChatGPT by OpenAI.

\bibliographystyle{alpha}
\bibliography{main.bbl}

\end{document}